\newtheorem{theorem}{Theorem}
\newtheorem{lemma}{Lemma}
\newtheorem{definition}{Definition}
\title{Operator topology \\ for logarithmic infinitesimal generators}
\author{Yoritaka Iwata}
\address[Y. Iwata]{Faculty of Chemistry, Materials and Bioengineering, Kansai University, Osaka 564-8680, Japan}
\email{ iwata$\_$phys@08.alumni.u-tokyo.ac.jp}
\begin{document}

\begin{abstract} 
Generally-unbounded infinitesimal generators are studied in the context of operator topology. 
Beginning with the definition of seminorm, the concept of locally convex topological vector space is introduced as well as the concept of Fr{\'e}chet space. 
These are the basic concepts for defining an operator topology.
Consequently, by associating the topological concepts with the convergence of sequence, a suitable mathematical framework for obtaining the logarithmic representation of infinitesimal generators is presented.
\end{abstract}

\keywords % use a minimum of 5 kwrds, separate them with a comma
{operator theory, locally-strong topology, infinitesimal generator}

%\end{frontmatter}

\maketitle

\section{Introduction} % first section MUST be titled Introduction, and feature introductory text; do NOT change this title
Let $X$ be an infinite/finite dimensional Banach space with the norm $\| \cdot \|$, and $Y$ be a dense subspace of $X$.
%%%
The Cauchy problem for abstract evolution equation of hyperbolic type \cite{70kato, 73kato} is defined by
\begin{equation} \begin{array}{ll} \label{eq01}
d u(t)/dt  - A(t) u(t) = f(t),  \qquad t \in [0,T], \vspace{2.5mm} \\
u(0) = u_0
\end{array} \end{equation}
in $X$, where $A(t): Y \to X$ is assumed to be the infinitesimal generator of the evolution operator $U(t,s)$ satisfying the strong continuity (for the definition of strong topology, refer to the following section) and the semigroup property:
\[ U(t,s) = U(t,r)U(r,s) \]
for $0 \le s \le r \le t < T$.
$U(t,s)$ is a two-parameter $C_0$-semigroup of operator that is a generalization of one-parameter $C_0$-semigroup and therefore an abstract generalization of the exponential function of operator.
For an  an infinitesimal generator $A(t)$ of $U(t,s)$, the solution $u(t)$ is represented by $u(t) = U(t,s) u_s$ with $u_s \in X$ for a certain $0 \le s \le T$ (cf. Hille-Yosida Theorem; for example see \cite{65yosida, 66kato, 79tanabe}).

\begin{figure*} [tb]
\begin{center}
\includegraphics[width=60mm]{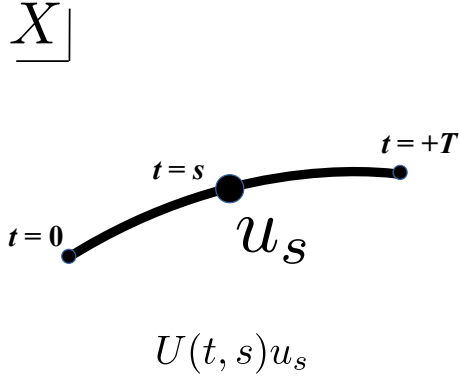} 
\caption{Trajectory $U(t,s) u_s$ in $X$. 
$U(t,s) \in B(X)$ is assumed to be strongly continuous with respect to time variables, so that a trajectory $U(t,s) u_s$ is continuous in $X$.
Note that it is necessary to replace the trajectory $U(t,s) u_s$ with the regularized trajectory $(e^{a(t,s)} - \kappa I) u_s$ to consider the negative time evolution \cite{17iwata-3, 19iwata}.
} \label{fig1}
\end{center}
\end{figure*}

\section{Operator topology} 
\subsection{The dual formalism of evolution equation}
The dual space of $X$ being denoted by $X^*$ is defined by
\[
X^* = L(X,K),
\]
where $K$ is a scalar field making up the space $X$, and $L(X,K)$ denotes the space of continuous linear functionals.
Since $K$ is also a Banach space, $L(X,K)$ satisfies the properties of Banach space.

Let $\langle \cdot, \cdot \rangle: X \times X^* \to \mathbb{C}$ be a dual product between $X$ and $X^*$.
The adjoint operator  $A(t)^*: D(A(t)) \to X^*$ is defined by the operator satisfying 
\[ \begin{array}{ll} 
\langle A(t) u , v \rangle  = \langle u, A(t)^* v \rangle
\end{array} \]
for any $u \in D(A(t))$ and $v \in D(A(t)^*)$.
If $X$ is a Hilbert space, the dual product is replaced with a scalar product $( \cdot, \cdot )$ equipped with $X$.
Unique dual correspondence is valid, if $X$ is strictly convex Banach space at least (for convex Banach space, see \cite{10brezis}).
%%%
By taking the dual product, the abstract evolution equation in $X$:
\begin{equation} \begin{array}{ll}  \label{eq01r}
d u(t)/dt  - A(t) u(t) = f(t),  \qquad t \in [0,T] 
\end{array} \end{equation}
is written as a scalar-valued evolution equation in $\mathbb{C}$:
\begin{equation} \begin{array}{ll} \label{eq02}
\langle d u(t)/dt, v(t) \rangle  - \langle A(t) u(t), v(t) \rangle = \langle f(t) , v(t) \rangle,  \qquad t \in [0,T]
\end{array} \end{equation}
for a certain $v(t) \in X^*$.
The formalism \eqref{eq02} has been considered by variational method of abstract evolution equations \cite{61lions, 72lions}, where the formalism \eqref{eq02} is also associated with the Gelfand triplets \cite{64gelfand}.
The equations \eqref{eq01r} and \eqref{eq02} cannot necessarily be equivalent in the sense of operator topology.

\subsection{locally-strong topology} 
Let $p(u)$ be a seminorm equipped with a space $\mathcal{X}$, and the family of seminorms be denoted by $P$.
Locally convex spaces are the generalization of normed spaces. 
Here the topology is called locally convex, if the topology admits a local base at 0 consisting of balanced, absorbent, convex sets. 
In other words, a topological space $\mathcal{X}$ is called locally convex, if its topology is generated by a family of seminorms satisfying 
\[ \begin{array}{ll}
\displaystyle \cap_{p \in P}  \left\{  u \in \mathcal{X}; ~ p(u) = 0  \right\}   = 0_{\mathcal{X}},
\end{array} \]
where $0_{\mathcal{X}}$ denotes the zero of topological space $\mathcal{X}$.
Fr{\'e}chet spaces are locally convex spaces that are completely metrizable with a certain complete metric. 
It follows that a Banach space $X$ is trivially a Fr{\'e}chet space.
%%%
For Banach spaces $X$ and $Y$, the bounded linear operators from $X$ to $Y$ is denoted by $B(X,Y)$.
In particular $B(X,X)$ is written by $B(X)$.
The operator space $B(X)$ is called the Banach algebra, since it holds the structure of algebraic ring.
The norm of $B(X)$, which means the operator norm, is defined by 
\[ \| T \|_{B(X)} = \sup_{ x \ne  0} \frac{\| Tx \|_X } { \| x\|_X}. \]
A norm is trivially a seminorm. 
Consequently, the topological space in this article is a Banach space $B(X)$.

There are several standard typologies defined on $B(X)$.
The topologies listed below are all locally convex, which implies that they are defined by a family of seminorms. 
%%%
The topologies are identified by the convergence arguments. 
Let $\{ T_n \}$ be a sequence in a Banach space $X$.
%%%
\begin{itemize}
\item $T_n \to T$ in the uniform topology, if the operator norm converges to 0;  \\
%%%
\item $T_n \to T$ in the strong topology, if $T_n x \to T x$ for any $x \in X$; \\
%%%
\item $T_n \to T$ in the weak topology, if $F(T_n x ) \to F(T x)$ for any $F \in X^*$ and $x \in X$; 
\end{itemize}
%%%
where the uniform topology is the strongest, and the weak topology is the weakest.
Indeed a topology is called stronger if it has more open sets and weaker if it has less open sets.
If $Y$ is a vector space of linear maps on the vector space $X$, then a topology $\sigma(X, Y)$ is defined to be the weakest topology on $X$ such that all elements of $Y$ are continuous.
The topology of $\sigma(X, Y)$ type is apparent if the formalism \eqref{eq02} is considered; the weak topology is written by $\sigma(B(X), B(X)^*)$.
%%%
Although there are some intermediate topologies between the above three; strong* operator topology, weak* operator topology, and so on, another type of topology is newly introduced in this article.

\begin{definition} [locally-strong topology] 
\quad \\
\begin{itemize}
\item $T_n \to T$ in the locally-strong topology, if $T_n \bar{x} \to T \bar{x} $ for a certain $\bar{x} \in X$. \\
%%%%%
%\item  $T_n \to T$ in the $\sigma$-strong$^*$ topology (ultrastrong$^*$ topology), if the adjoint map is continuous.  \\
%The is the weakest topology stronger than the ultrastrong
%topology such that the adjoint map is continuous. 
%It is defined by the family of seminorms $pw(x)$ and $pw(x*)$ for positive elements $w$ of $B(X)*$. 
%It is stronger than all topologies below.\\
%\item  $T_n \to T$ in the $\sigma$-strong topology, if the adjoint map is continuous.  \\
%The  is defined by the family of seminorms pw(x) for positive elements w of $B(H)^*$. \\
% \item $T_n \to T$ in the weak* operator topology, if the adjoint map is continuous.  \\
% The $\sigma (B(X), B(X)^*)$ topology is defined by the family of seminorms $| \langle F, x \rangle |$ for elements $F \in B(X)^*$. 
\end{itemize}
\end{definition}
%%%
\hspace{-6mm}
This topology is utilized to define a weak differential appearing in the logarithmic representation.

\section{Infinitesimal generator}
\subsection{Logarithmic infinitesimal generator}

%Although the logarithmic representation has been obtained originally for infinitesimal generators of invertible groups of operators, it can be generalized to that for infinitesimal generators of semigroups of operator.
%In order to generalize the logarithmic representation to any closed operator $U(t,s)$.

The logarithm of evolution operator is represented using the Riesz-Dunford integral.
A time interval $[0, T]$ with $0 \le s,t \le T$ is provided.
For a certain $u_s \in X$, let a trajectory $u(t) = U(t,s) u_s$ be given in a Banach space $X$.
%\begin{lemma} [Logarithmic representation of operators]   \label{lem3}
%Let $t$ and $s$ satisfy $0 \le t,s \le T$.
For a given $U(t,s) \in B(X)$, its logarithm is well-defined \cite{17iwata-1}; there exists a certain complex number $\kappa$ satisfying
\[ \begin{array}{ll}
{\rm Log} (U(t,s)+\kappa I) = \frac{1}{2 \pi i} \int_{\Gamma} {\rm Log} \lambda  
 ~ ( \lambda  - \kappa - U(t,s) )^{-1}  d \lambda,
\end{array} \]
where an integral path $\Gamma$, which excludes the origin, is a circle in the resolvent set of $U(t,s) +\kappa I$.
%Here $\Gamma$ is independent of $t$ and $s$. 
%${\rm Log} (U(t,s)+ \kappa I)$ is bounded on $X$.
%\end{lemma}

Let us call ${\rm Log} (U(t,s)+\kappa I)$ the alternative infinitesimal generator to $A(t)$.
Since the alternative infinitesimal generator \cite{17iwata-3}
\[ a(t,s) := {\rm Log} (U(t,s)+\kappa I) \]
 is necessarily bounded on $X$, its exponential function $e^{a(t,s)}$ is always well defined.
Note that the alternative infinitesimal generator $a(t,s)$ is bounded on $X$, although the corresponding infinitesimal generator $A(t)$ is possibly an unbounded operator.
It follows that $e^{-a(t,s)} = (e^{a(t,s)})^{-1}$ is automatically well defined if $e^{a(t,s)}$ is well defined.
Also $e^{a(t,s)}$ is invertible regardless of the validity of the invertible property for original $U(t,s)$.
The logarithmic representation of infinitesimal generator (logarithmic infinitesimal generator, for short) is obtained as follows.

\begin{lemma} [Logarithmic infinitesimal generators \cite{17iwata-1}]
\label{thm1}
Let $t$ and $s$ satisfy $0 \le t,s \le T$, and $Y$ be a dense subspace of $X$.
If $A(t)$ and $U(t,s)$ commute, infinitesimal generators $\{ A(t) \}_{0 \le t \le T}$ are represented by means of the logarithm function; there exists a certain complex number $\kappa \ne 0$ such that
\begin{equation} \label{logex} \begin{array}{ll}
 A(t) ~ u =  (I - \kappa e^{-a(t,s)})^{-1}~  \partial_{t}  a(t,s)  ~ u, 
\end{array} \end{equation}
where $u$ is an element of a dense subspace $Y$ of $X$. \\
\end{lemma}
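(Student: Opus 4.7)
The plan is to relate $A(t)$ to the time derivative of $a(t,s)$ by exploiting the defining identity $e^{a(t,s)} = U(t,s)+\kappa I$ and the commutativity hypothesis, and then to solve algebraically for $A(t)u$.

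First I would start from the Riesz--Dunford definition of $a(t,s)$, so that $e^{a(t,s)} = U(t,s)+\kappa I$ on $X$. Differentiating both sides in $t$ and applying each side to $u\in Y$ gives, on the one hand, $\partial_t e^{a(t,s)}u$ and, on the other hand, $\partial_t U(t,s)u = A(t)U(t,s)u$ from the abstract Cauchy problem \eqref{eq01}. The differentiation of $a(t,s)$ itself should be understood in the locally-strong topology of the preceding section: $\partial_t a(t,s)$ is defined through the action on a fixed $u \in Y$, which is exactly what that topology is introduced to accommodate.

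Next I would use the commutativity of $A(t)$ and $U(t,s)$ to justify the chain rule for the exponential, namely $\partial_t e^{a(t,s)} = e^{a(t,s)}\,\partial_t a(t,s)$ as operators acting on $Y$. The point is that since $A(t)$ commutes with $U(t,s)$, it commutes with every element of the operational calculus built from $U(t,s)+\kappa I$, so in particular with $a(t,s)$ and with $\partial_t a(t,s)$; this forces $a(t,s)$ and $\partial_t a(t,s)$ to commute, so the usual Duhamel expansion of $\partial_t e^{a(t,s)}$ collapses to the single term $e^{a(t,s)}\partial_t a(t,s)$. Combining with the previous step and using $A(t)U(t,s)=U(t,s)A(t)$ on $Y$ yields
\[
U(t,s)\, A(t)u \;=\; e^{a(t,s)}\,\partial_t a(t,s)\, u .
\]

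Finally I would substitute $U(t,s)=e^{a(t,s)}-\kappa I$ on the left and multiply through by the bounded invertible operator $e^{-a(t,s)}$ to obtain
\[
\bigl(I-\kappa\, e^{-a(t,s)}\bigr)\,A(t)u \;=\; \partial_t a(t,s)\,u ,
\]
and then invert $I-\kappa e^{-a(t,s)}$ to reach \eqref{logex}. Invertibility of $I-\kappa e^{-a(t,s)}$ follows once $\kappa$ is chosen small enough, or in a region of the resolvent set, so that a Neumann series converges; this is where the freedom in choosing $\kappa\ne 0$ in the statement is used.

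The main obstacle I expect is the rigorous justification of the exponential chain rule $\partial_t e^{a(t,s)}=e^{a(t,s)}\partial_t a(t,s)$ when $A(t)$ is generally unbounded: one must differentiate the Riesz--Dunford integral in a topology weak enough that the derivative exists but strong enough to conclude an operator identity on $Y$. This is precisely the role of the locally-strong topology defined above, and verifying that the Duhamel cross terms vanish under the commutativity hypothesis is the technical core of the argument; the subsequent algebraic manipulations are routine.
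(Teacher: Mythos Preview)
Your route is essentially the paper's, only mirrored: you differentiate $e^{a(t,s)}=U(t,s)+\kappa I$ and invoke the exponential chain rule, whereas the paper differentiates the logarithm directly, writing $(U(t,s)+\kappa I)\,\partial_t a(t,s)=\partial_t U(t,s)$ and then inserting $A(t)u=U(t,s)^{-1}\partial_t U(t,s)u$. The paper's version sidesteps your Duhamel discussion entirely, since the derivative of the Riesz--Dunford logarithm already yields $(U+\kappa I)^{-1}\partial_t U$ without needing to argue that $a(t,s)$ and $\partial_t a(t,s)$ commute; the remaining algebra $(I+\kappa(e^{a}-\kappa I)^{-1})=(I-\kappa e^{-a})^{-1}$ is the same in both cases.

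One point to correct: your justification for the invertibility of $I-\kappa e^{-a(t,s)}$ by ``choosing $\kappa$ small'' does not work. Since $e^{-a(t,s)}=(U(t,s)+\kappa I)^{-1}$, one has $I-\kappa e^{-a(t,s)}=e^{-a(t,s)}U(t,s)$, so invertibility of this factor is exactly invertibility of $U(t,s)$ and is independent of the size of $\kappa$; a Neumann series in $\kappa$ is not available because $\kappa(U(t,s)+\kappa I)^{-1}$ need not have norm below $1$ as $\kappa\to 0$. The paper's formal argument simply uses $U(t,s)^{-1}$ outright, so this invertibility is an implicit standing hypothesis rather than something secured by the choice of $\kappa$.
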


\begin{proof}
Only formal discussion is made here (for the detail, see \cite{17iwata-1, 19iwata}).   
\[
(U(t,s) + \kappa I )  \partial_{t}  a(t,s)   = (U(t,s) + \kappa I ) (U(t,s) + \kappa I )^{-1} \partial_t U(t,s) .
\]
It leads to
\[ \begin{array}{ll}
A(t) ~ u
 = U(t,s)^{-1} \partial_t U(t,s) u  %\vspace{1.5mm}  \\
 = U(t,s)^{-1} (U(t,s) + \kappa I )  \partial_t a(t,s) ~ u  \vspace{1.5mm}  \\
  = (I + \kappa ( e^{a(t,s)}- \kappa I  )^{-1}  )  \partial_t a(t,s) ~ u  \vspace{1.5mm}  \\
 =  (  e^{a(t,s)}- \kappa I     + \kappa I  )   ( e^{a(t,s)}- \kappa I  )^{-1} \partial_{t}  a(t,s)  ~ u   \vspace{1.5mm}  \\
 =  ( I - \kappa e^{-a(t,s)} )^{-1} \partial_{t}  a(t,s)  ~ u ,
\end{array} \] 
where $u$ is an element in $Y$.
 \quad $\square$
\end{proof}

Equation (\ref{logex}) is the logarithmic representation of infinitesimal generator $A(t)$.
This representation is useful not only to mathematical analysis but also to operator algebra \cite{17iwata-2, 19iwata}.
In the next section the convergence of the limit in the differential operator $\partial_{t}$ of Eq. (\ref{logex}) is discussed.

\subsection{Differential operator in the logarithmic representation}

The convergence in the locally-strong topology is applied to the evolution operator $U(t,s) \in B(X)$.

\begin{definition} [Weak limit using the locally-strong topology]
For $0 \le t, s  \le T$, the weak limit\index{weak limit}
\[ \begin{array}{ll}
 \mathop{\rm lim}\limits_{h \to 0} h^{-1} (U(t+h,s) - U(t,s)) ~u_s 
= \mathop{\rm lim}\limits_{h \to 0}   h^{-1}(U(t+h,t) - I) ~ U(t,s) ~u_s,  
\end{array} \]
is assumed to exist for a certain $u_s$ in a dense subspace $Y$ of $X$. 
The limit ``$\lim$'' is practically denoted by ``$\mathop{\rm wlim}$'' in the following, since it is a limit in a kind of weak topology.

Let $t$-differential of $U(t,s)$ in a weak sense of the above be denoted by $\partial_t$, then it follows that
\begin{equation} \label{de-group} \begin{array}{ll} 
\partial_t U(t,s)~u_s = A(t) U(t,s) ~u_s,
 \end{array}  \end{equation}
and a generalized concept of infinitesimal generator $A(t): Y  ~\to~  X$ is introduced by
\begin{equation} \label{pe-group}
A(t) u_s := \mathop{\rm wlim}\limits_{h \to 0}  h^{-1} (U(t+h,t) - I) u_s
\end{equation}
for a certain $u_s \in Y$, where the convergence in $\mathop{\rm wlim}$ must be replaced with the strong convergence in the standard theory of abstract evolution equations \cite{79tanabe}. \\
 \end{definition}

The operator $A(t)$ defined in this way for a whole family $\{U(t,s)\}_{0 \le t,s \le T}$ is called the pre-infinitesimal generator in \cite{17iwata-1}, because only its exponetiability with a certain ideal domain is ensured without justifying the dense property of its domain space.
Indeed pre-infinitesimal generators are not necessarily infinitesimal generators, while infinitesimal generators are pre-infinitesimal generators.

\section{Main result}
According to the standard theory of abstract evolution equation \cite{79tanabe}, the evolution operator is assumed to be strongly continuous.
It follows that the trajectory $U(t,s) u_s$ is continuous in $X$.
Here is the reason why it is sufficient to consider the convergence of differential operator $\partial_t$ only with a fixed element ${\bar u} = u_s \in Y \subset X$ with $0 \le s \le T$.  
Also, in terms of analyzing the trajectory in finite/infinite dimensional dynamical systems, it is reasonable to consider the convergence in the topology unique to the trajectory. 
Consequently the infinitesimal generator can be extracted on a sample point in the interval (Fig.~\ref{fig1}).
Indeed, according to the independence between $t$ and $s$, 
\[
A(t) u_s = \mathop{\rm wlim}\limits_{h \to 0}  h^{-1} (U(t+h,t) - I) u_s
\]
is true for any $t \in [s, T]$, once $A(t)$ is obtained for a sample point $u_s \in Y$.
Such a restrictive topological treatment contributes to generalize the differential.

For a given evolution operator $U(t,s) \in B(X)$, the profile of locally-strong topology is obtained in this article.
The locally-strong topology is introduced for defining the logarithmic representation of infinitesimal generator.
In Banach space $B(X)$, a subset $F \subset B(X)$ is a closed set, if and only if
\[
\{ a_n \} \in F,~ a \in B(X),~ a_n \to a  \quad \Rightarrow \quad a \in F
\]
is satisfied $(n=1,2,\cdots)$, where the operation of limit depends on  a chosen topology.
Here the following two theorems are proved to clarify the mathematical property of the locally-strong topology.

\begin{theorem} \label{thm2}
The locally-strong topology is weaker than the strong topology.
\end{theorem}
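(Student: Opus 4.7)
The plan is to reduce the topology comparison to an implication at the level of convergent sequences, and then to invoke the sequential characterization of closed sets displayed immediately before the theorem to upgrade this sequence-level implication into the required inclusion of topologies.

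First I would take an arbitrary sequence $\{ T_n \} \subset B(X)$ converging to $T$ in the strong topology, so by the definition recalled in the preceding section, $T_n x \to T x$ in the norm of $X$ for every $x \in X$. In particular, for any choice of reference element $\bar{x} \in X$, for instance the base point $u_s \in Y$ featuring in the trajectory discussion of the preceding section, we have $T_n \bar{x} \to T \bar{x}$, which is precisely the locally-strong convergence criterion of the Definition. Hence every strongly convergent sequence is locally-strongly convergent to the same limit.

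Next I would translate this pointwise implication into the required topological comparison. Let $F \subset B(X)$ be closed in the locally-strong topology; then by the sequential closedness criterion stated just before the theorem, every locally-strongly convergent sequence in $F$ has its limit in $F$. Combined with the previous step, any strongly convergent sequence in $F$ is in particular locally-strongly convergent, so its limit lies in $F$; that is, $F$ is strongly closed. Passing to complements yields the inclusion of open sets expressing that the locally-strong topology is weaker than the strong topology.

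The main point of care, rather than a genuine obstacle, is ensuring that comparing the two topologies through sequences is legitimate here. Since both topologies are locally convex and defined by seminorm families of the form $p_x(T) = \| T x \|$, and since the convergence definitions in the paper are framed sequentially, the equivalence between topological and sequential closedness codified in the displayed characterization does the job. Beyond this bookkeeping, the proof is a direct unwinding of the two definitions, reflecting the intuition that requiring $T_n x \to T x$ at a single $\bar{x}$ is strictly easier than requiring it at every $x$.
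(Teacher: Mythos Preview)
Your argument is correct and uses the same ingredients as the paper's proof: the trivial implication that strong convergence ($T_n x \to T x$ for every $x$) entails locally-strong convergence ($T_n \bar x \to T \bar x$ for the fixed $\bar x$), together with the sequential characterization of closed sets displayed just before the theorem. In fact your version is the tidier one: you correctly deduce that every locally-strong closed set is strongly closed, which is precisely what ``weaker'' requires, whereas the paper's proof opens with the reversed assertion (strongly closed $\Rightarrow$ locally-strong closed), which would point the comparison the wrong way even though the underlying convergence observation is the same.
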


\begin{proof}
It is enough to prove that a closed set in strong topology is closed in the locally-strong topology.
Let an arbitrary closed set of $B(X)$ in the strong topology be $V$.
It satisfies
\[
 \{ T_n \} \in V,~ T \in B(X),~  \lim_{n \to \infty} \| T_n x - T x \| = 0  
 \quad \Rightarrow \quad 
 T \in V
 \]
 for an arbitrary $x \in X$.
 In the locally-strong topology ($\| (T_n - T) \bar{x} \|$ for a certain ${\bar x} \in X$), the convergence $T_n \to T \in V$ is true. 
 \quad $\square$
\end{proof}

\begin{theorem} \label{thm3}
The locally-strong topology is not necessarily stronger than the weak topology.
\end{theorem}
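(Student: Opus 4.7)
The plan is to exhibit an explicit counterexample: a Banach space $X$, a distinguished element $\bar{x} \in X$, a sequence $\{T_n\} \subset B(X)$, and a limit $T \in B(X)$ such that $T_n \to T$ in the locally-strong topology while $T_n \not\to T$ in the weak topology. Since a topology $\tau_1$ is stronger than $\tau_2$ precisely when every $\tau_1$-convergent sequence is also $\tau_2$-convergent (to the same limit), producing such a sequence refutes the implication.

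The guiding intuition is that the locally-strong topology only tests the operators against a single fixed vector $\bar{x}$, whereas the weak topology demands $F(T_n x) \to F(Tx)$ for \emph{every} $x \in X$ and every $F \in X^*$. So the counterexample should make $T_n$ coincide with $T$ on $\bar{x}$ while behaving wildly on a transverse direction.

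Concretely, I would take $X = \mathbb{C}^2$ with standard basis $\{e_1,e_2\}$ (any Banach space of dimension $\geq 2$ would do), fix $\bar{x} = e_1$ as the element defining the locally-strong topology, and set $T = 0$ together with $T_n e_1 = 0$ and $T_n e_2 = n e_2$. Then $T_n \bar{x} = 0 = T\bar{x}$ for every $n$, so the locally-strong convergence $T_n \to T$ is immediate. On the other hand, choose $F \in X^*$ by $F(a e_1 + b e_2) = b$ and $x_0 = e_2 \in X$; then $F(T_n x_0) = n$, which does not converge to $F(T x_0) = 0$, so weak convergence fails.

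There is no real obstacle here; the only delicate point is interpreting the phrase \emph{``for a certain $\bar{x} \in X$''} in the definition of the locally-strong topology. Both the preceding discussion about sampling the infinitesimal generator at a fixed point $u_s$ and the proof of Theorem~\ref{thm2} make clear that $\bar{x}$ is a fixed element specifying the topology, not an existential quantifier depending on the sequence. With that reading the construction above completes the argument, and one could equally well realize it in an infinite-dimensional setting closer to the paper's applications, e.g.\ $X = \ell^2$ with $\bar{x} = e_1$, $T = 0$, and $T_n e_k = n e_k$ for $k \geq 2$, $T_n e_1 = 0$.
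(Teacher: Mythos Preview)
Your proof is correct and, in fact, cleaner than the paper's own argument. The paper does not construct an explicit counterexample; instead it argues abstractly in terms of closed sets: starting from an arbitrary locally-strong-closed set $V$, it observes that the defining sequential closedness condition at the single vector $\bar{x}$ entails the corresponding ``locally-weak'' closedness at $\bar{x}$, but that nothing forces the analogous statement at any other $x_1 \neq \bar{x}$ --- hence, in the paper's words, ``there is no guarantee'' that the locally-strong topology is stronger than the weak one. Your route is the concrete realization of exactly that heuristic: you exhibit a specific $X$, a specific $\bar{x}$, and a sequence $\{T_n\}$ that converges at $\bar{x}$ while blowing up in a transverse direction, so weak convergence fails visibly. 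The advantage of your approach is that it settles the matter unambiguously; the paper's argument, as written, only shows that a certain implication \emph{does not follow formally} from the hypothesis, which is strictly weaker than showing it is actually false.

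One minor remark: your biconditional ``$\tau_1$ is stronger than $\tau_2$ precisely when every $\tau_1$-convergent sequence is also $\tau_2$-convergent'' is not literally true for arbitrary (non-first-countable) topologies, since sequences need not determine the topology. However, you only use the forward implication --- if $\tau_1$ is stronger then $\tau_1$-convergence implies $\tau_2$-convergence --- and that direction is always valid, so the counterexample does the job.
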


\begin{proof}
The proof is carried out in the similar manner to Theorem \ref{thm2}.
Let an arbitrary closed set of $B(X)$  in the locally-strong topology be $V$.
It satisfies
\begin{equation} \label{stt}
 \{ T_n \} \in V,~ T \in B(X),~    \lim_{n \to \infty} \| T_n  {\bar x}- T {\bar x} \| = 0  
 \quad \Rightarrow \quad 
 T \in V
 \end{equation}
 for a fixed ${\bar x} \in X$.
 By taking the dual product of an arbitrary $F \in X^*$, it follows that
 \[
 \{ T_n \} \in V,~ a \in B(X),~    \lim_{n \to \infty} \langle (T_n - T) {\bar x},~ F \rangle = 0  
 \quad \Rightarrow \quad 
 T \in V
 \]
It shows that the closedness of $V$ in a locally-weak topology being defined by fixing $x = {\bar x}$.

On the other hand, weak convergence cannot be assured if $x \ne {\bar x}$.
Indeed, for $x_1 \ne {\bar x}$, the statement 
\[
 \{ T_n \} \in V,~ a \in B(X),~    \lim_{n \to \infty} \langle (T_n - T) x_1,~ F \rangle = 0  
 \quad \Rightarrow \quad 
 T \in V
 \]
does not follow from the statement \eqref{stt}.
There is no guarantee for locally-strong topology to be stronger than the weak topology.
  \quad $\square$
\end{proof}

\section{Summary}
The concept of locally-strong topology is introduced by the proofs clarifying its specific topological weakness.
The locally-strong topology is a topology unique to the solution trajectory of abstract evolution equations (Fig.~\ref{fig1}).
Although the locally-strong topology has already been utilized to clarify the algebraic structure of semigroups of operators and their infinitesimal generators, it is also expected to be useful to analyze each single trajectory defined in finite/infinite-dimensional dynamical systems. 

%\begin{backmatter}
\section*{Acknowledgments}
The author is grateful to Prof. Emeritus Hiroki Tanabe for valuable comments.
This work was partially supported by JSPS KAKENHI Grant No. 17K05440.

%\begin{authordetails}	
%	\author{Yoritaka Iwata$^{1*}$}
	%
%      \address[1]{Faculty of Chemistry, Materials and Bioengineering, Kansai University, Osaka, Japan}
	%
%	\address{*Address all correspondence to: iwata$\_$phys@08.alumni.u-tokyo.ac.jp}
	%
	%\address{\dag\ These authors contributed equally}
	
%	\IntechOpentext{\textcopyright\ \the\year{} The Author(s). License IntechOpen. This chapter is distributed under the terms of the Creative Commons Attribution License (http://creativecommons. org/licenses/by/3.0), which permits unrestricted use, distribution, and reproduction in any medium, provided the original work is properly cited.}
	
	% Note: The copyright year will be changed accordingly during production to correspond with the year of publication.
	
%\end{authordetails}

%\end{backmatter}


\begin{thebibliography}{99}
\bibitem{10brezis}
H. Brezis,
Functional Analysis, Sobolev Spaces and Partial Differential Equations,
Springer-Verlag, 2010.

\bibitem{64gelfand}
M.Gelfand, N.Y.Vilenkin,
Generalized functions.Vol.4, Applications of harmonic analysis, 
Academic Press, 1964.

\bibitem{17iwata-1}
Y. Iwata,
Infinitesimal generators of invertible evolution families,
Methods Func. Anal. Topology {\bf 23} 1 (2017) 26-36. \\

\bibitem{17iwata-3}
Y. Iwata,
Alternative infinitesimal generator of invertible evolution families, 
J. Appl. Math. Phys. {\bf 5} (2017) 822-830. \\

\bibitem{17iwata-2}
Y. Iwata,
Operator algebra as an application of logarithmic representation of infinitesimal generators
J. Phys.: Conf. Ser. 965 (2018) 012022. \\

\bibitem{19iwata}
Y. Iwata,
Theory of $B(X)$-module,
arXiv:1907.08767.

\bibitem{66kato}
T. Kato,
Perturbation Theory for Linear Operators,
Springer-Verlag, 1966. \\

\bibitem{70kato}
T. Kato,
Linear evolution equation of ''hyperbolic" type,
J. Fac. Sci. Univ. Tokyo {\bf 17} (1970) 241-258.  \\

\bibitem{73kato}
T. Kato,
Linear evolution equation of ''hyperbolic" type II,
J. Math. Soc. Japan {\bf 25} 4 (1973) 648-666.  \\

\bibitem{61lions}
J. L. Lions, 
Equations differentielles operationnelles et problemes aux limits,
Springer-Verlag, 1961.  \\

\bibitem{72lions}
J. L. Lions and E. Magenes, 
Non-homogeneous boundary value problems and applications,
Springer-Verlag, 1972.  \\

\bibitem{79tanabe}
H. Tanabe,
Equations of evolution,
Pitman, 1979. \\

\bibitem{65yosida}
K. Yosida,
Functional Analysis, 
Springer-Verlag, 1965. \\

\end{thebibliography}
\end{document}